\makeindex \setcounter{tocdepth}{2}
\theoremstyle{plain}
\newtheorem{theorem}{Theorem}[section]
\newtheorem{proposition}[theorem]{Proposition}
\newtheorem{corollary}[theorem]{Corollary}
\newtheorem{lemma}[theorem]{Lemma}
\theoremstyle{definition}
\newtheorem{remark}[theorem]{Remark}
\def\bF{\mathbb{F}}
\def\bQ{\mathbb{Q}}
\def\bZ{\mathbb{Z}}
\def\cG{\mathcal{G}}
\def\fm{\mathfrak{m}}
\def\deg{\mathbf{deg}}
\def\Int{\mathrm{Int}}
\begin{document}

\title[Generating polynomials for generalized binomial coefficients]{On the generating polynomials for the distribution of generalized binomial coefficients in discrete valuation domains}

\author{Dong Quan Ngoc Nguyen}

\date{June 12, 2020}

\address{Department of Applied and Computational Mathematics and Statistics \\
         University of Notre Dame \\
         Notre Dame, Indiana 46556, USA }

\email{\href{mailto:dongquan.ngoc.nguyen@nd.edu}{\tt dongquan.ngoc.nguyen@nd.edu}}

\urladdr{http://nd.edu/~dnguye15}

\maketitle

\tableofcontents

\begin{abstract}

For a discrete valuation domain $V$ with maximal ideal $\mathfrak{m}$ such that the residue field $V/\mathfrak{m}$ is finite, there exists a sequence of polynomials $(F_n(x))_{n \ge 0}$ defined over the quotient field $K$ of $V$ that forms a basis of the $V$-module $\text{Int}(V) = \{f \in K[x] | f(V)\subseteq V\}$. This sequence of polynomials bears many resemblances to the classical binomial polynomials $(\binom{x}{n})_{n \ge 0}$. In this paper, we introduce a generating polynomial to account for the distribution of the $V$-values of the polynomials $F_n(x)$ modulo the maximal ideal $\mathfrak{m}$, and prove a result that provides a method for counting exactly how many $V$-values of the polynomials $(F_n(x))_{n \ge 0}$ fall into each of the residue classes modulo $\mathfrak{m}$. Our main theorem in this paper can be viewed as an analogue of the classical theorem of Garfield and Wilf in the context of discrete valuation domains.

\end{abstract}

\section{Introduction}

For an integer $n \ge 0$, the classical binomial polynomial $\binom{x}{n} \in \bQ[x]$ is defined as 
\begin{align*}
\dfrac{x(x - 1)(x - 2)\cdots (x - n + 1)}{n!}. 
\end{align*}
The sequence $\left(\binom{x}{n}\right)_{n \ge 0}$ plays an important role in studying the $\bZ$-module $\text{Int}(\bZ) = \{f \in \bQ[x] \; | \; f(\bZ) \subseteq \bZ\}$. Studying the $\bZ$-values of these polynomials $\binom{x}{n}$ at integers $m \ge 0$ thus attracts special attention, and is an  active research area. Many basic questions arise concerning with such $\bZ$-values; for example, for a given prime $p$ and an integer $n \ge 0$, can one describe exactly which integers $m$ are such that the values of binomial polynomial $\binom{x}{n}$ at $m$ are divisible by $p$, or satisfy certain congruence conditions modulo $p$? Motivated by such question, Garfield and Wilf \cite{GW} proposed a method using generating polynomials for studying the distribution of $\bZ$-values of binomial polynomials modulo primes. 

There are many strong analogues between the integers $\bZ$ and the ring of polynomials over a finite field $\bF_q$, say $\bF_q[t]$ (see, for example, Goss \cite{Goss} and Weil \cite{W}). Inspired by the work of Garfield and Wilf \cite{GW} and the analogies between $\bZ$ and $\bF_q[t]$, the author \cite{N} proved a function field analogue of the Garfield--Wilf theorem with the classical binomial coefficients replaced by the Carlitz binomial coefficients (see Carlitz \cite{Car}). 

In this paper, we further consider another analogy between $\bZ$ and an arbitrary discrete valuation domain with finite residue field in view of the classical theorem of Garfield and Wilf. In such a discrete valuation domain $V$ with maximal ideal $\fm$, there exists a sequence of polynomials $(F_n(x))_{n \ge 0}$ (see (\ref{def-F_n}) below for a precise notion of $F_n(x)$) defined over the quotient field $K$ of $V$ such that they form a basis of the $V$-module $\text{Int}(V) = \{f \in K[x] \; | \; f(V) \subseteq V\}$ in a similar manner as the classical binomial polynomials $(\binom{x}{n})_{n \ge 0}$ do for the $\bZ$-module $\text{Int}(\bZ)$. In view of this similarity, one can view $(F_n(x))_{n \ge 0}$ as an analogue in $V$ of the classical binomial polynomials $(\binom{x}{n})_{n \ge 0}$. One can ask for analogues of results about the classical binomial polynomials in the context of $V$ with $(F_n(x))_{n \ge 0}$ in place of the classical binomial polynomials; for illustration, a basic problem is to understand the divisibility properties of the $V$-values of $F_n(x)$ at elements $u \in V$. Regarding such a problem, it suffices to consider only the $V$-values of $F_n(x)$ at elements $u_n \in V$, where $(u_n)_{n \ge 0}$ forms a \textit{very well-distributed and well-ordered sequence} (see \cite[Definition {\bf II.2.1}]{CC}) in $V$. Indeed, for such a sequence $(u_n)_{n \ge 0}$, it is well-known that a polynomial $f \in K[x]$ of degree $n$ belongs in $\text{Int}(V)$ if and only if the values of $f(x)$ at $u_0, u_1, \ldots, u_n$ belong in $V$ (see \cite[Corollary {\bf II2.8}]{CC}). Since the sequence $(F_n(x))_{n \ge 0}$ forms a basis of the $V$-module $\text{Int}(V)$, it suffices to study the $V$-values of $(F_n(x))_{n \ge 0}$ at the sequence $(u_n)_{n \ge 0}$. Based on this similarity with the $\bZ$-values of the classical binomial polynomials $\binom{x}{n}$ at integers $m \ge 0$, we call the values $F_n(u_m)$ for $n, m \ge 0$ the \textit{generalized binomial coefficients} in $V$ in analogy with the classical binomial coefficients $\binom{m}{n}$. 

Motivated by the classical theorem of Garfield and Wilf \cite{GW} and the above discussion about analogies between the classical binomial polynomials in $\bZ$ and the polynomials $(F_n(x))_{n \ge 0}$ in $V$, it is natural to ask whether there is an analogue of Garfield--Wilf theorem in the context of discrete valuation domains. We give an affirmative answer to this question, and our main theorem (see Theorem \ref{main-thm}) in this paper can be viewed as an analogue of the theorem of Garfield and Wilf in the context of discrete valuation domains. 

Our paper is structured as follows. In Section \ref{sec-basic-notions}, we introduce some basic notions and notation that will be used throughout the paper. Furthermore, in the same section, we explain an analogue of the classical binomial polynomials in the setting of discrete valuation domains, and recall an analogue of Lucas' theorem for discrete valuation domains which is due to Boulanger and Chabert \cite{BC}. In Section \ref{section-main-result}, we prove the main theorem in this paper (see Theorem \ref{main-thm}). The proof of our main theorem is modeled on that of the classical theorem of Garfield and Wilf. Our main contribution in this paper is to find a correct analogue of the generating polynomials for counting the residue classes of the integral values of the polynomials $(F_n(x))_{n \ge 0}$ modulo the maximal ideal of a discrete valuation domain. Note that despite similarities between the classical binomial coefficients $\binom{m}{n}$ and the generalized binomial coefficients $F_n(u_m)$, there is a distinct difference between these two values: \textit{both arguments $n, m$ in the classical binomial coefficients $\binom{m}{n}$ belong in $\bZ$ whereas one argument, say $u_m$ in the generalized binomial coefficients $F_n(u_m)$ belong in a discrete valuation domain $V$ which in general does not contain any ordering as $\bZ$ does}. Hence in order to introduce a notion of generating polynomials for the distribution of the generalized binomial coefficients $F_n(u_m)$, we need to introduce two basic maps (see Subsection \ref{subsec-2maps}) to transfer elements between $V$ and $\bZ$ so that one can properly \textit{count} the number of generalized binomial coefficients $F_n(u_m)$ modulo the maximal ideal of a discrete valuation domain. 

Throughout the paper, $\bZ$ denotes the set of integers, $\bZ_{\ge 0}$ stands for the set of nonnegative integers, and $\bZ_{>0}$ is the set of positive integers.

\section{Basic notions and notation} 
\label{sec-basic-notions}

\subsection{An analogue of Lucas' theorem for discrete valuation domains}

In this subsection, we introduce a few basic notions and notations that we will use throughout this paper. We also recall an analogue of Lucas's theorem for discrete valuation domains which was proved by Boulanger and Chabert \cite{BC}. 

Let $V$ be a discrete valuation domain, and let $\fm$ be the maximal ideal of $V$. Throughout this paper, we assume that the residue field $V/\fm$ is finite, and isomorphic to the finite field $\bF_q$ for some $q$, which implies that the cardinality of $V/\fm$ is $q$. Let $\pi$ be a generator of $\fm$, $K$ the quotient field of $V$, and let $v$ the corresponding valuation of $K$. Let $\widehat V, \widehat K, \widehat \fm$ be the completions of $V$, $K$, and $\fm$ with respect to the $\fm$-adic topology, respectively. By abuse of notation, we still use the same notation $v$ to denote the extension of $v$ to $\widehat K$. 

We choose elements $u_0, \ldots, u_{q - 1}$ in $V$ such that $u_0 = 0$, and $R = \{u_0, u_1, \ldots, u_{q - 1}\}$ is the set of representatives of $V$ modulo $\fm$. It is known (see \cite[Chapter 2]{CF}) that each element $x \in \widehat V$ has a unique $\pi$-adic expansion of the form
$$x = \sum_{j \ge 0} x_j\pi^j,$$
where the $x_j$ are some elements in $R$.
We extend the set $R$ as follows.  For each integer $n \ge 0$, represent $n$ in the $q$-adic expansion of the form 
$$n = n_0 + n_1q + \cdots n_kq^k,$$
where the $n_i$ are integers such that $0 \le n_i \le q - 1$ for each $i$.

We define 
\begin{align}
\label{def-u_n}
u_n = u_{n_0} + u_{n_1}\pi + \cdots + u_{n_k}\pi^k \in V.
\end{align}

It is clear that for $0 \le n \le q - 1$, the element $u_n$ we just construct above is the same as in the set $R$. With the above construction, we obtain a sequence of elements $(u_n)_{n \ge 0}$ in $V$ whose first $q$ elements coincide with $R$. The sequence $(u_n)_{n \ge 0}$ forms a \textit{very well distributed and well-ordered sequence} in $V$ (see \cite[Definition {\bf II.2.1} and Proposition {\bf II.2.3}]{CC}).

We construct a sequence of polynomials $(F_n(x))_{n \ge 0} \subseteq K[x]$ (which can be viewed as an analogue of the classical binomial polynomisl) by letting, for each $n \ge 1$,
\begin{align}
\label{def-F_n}
F_n(x) = \prod_{k = 1}^{n - 1}\dfrac{x - u_k}{u_n - u_k},
\end{align}
and setting $F_0(x) = 1$.

The sequence $(F_n(x))_{n \ge 0}$ shares several similar properties as the sequence of classical binomial polynomials $\left(\binom{x}{n}\right)_{n \ge 0}$, where we recall that for each $n \ge 0$,
\begin{align*}
\binom{x}{n} = \dfrac{x(x - 1)(x - 2)\cdots (x - n + 1)}{n!}.
\end{align*}

One of the main resemblances between these two sequences (see  Theorem {\bf II.2.7} in \cite{CC}) is that the sequence $(F_n(x))_{n \ge 0}$ forms a basis of the $V$-module $\Int(V) = \{f \in K[x] \; | \; f(V) \subseteq V\}$, which is an analogue of the classical result that $\left(\binom{x}{n}\right)_{n \ge 0}$ is a basis of the $\bZ$-module $\Int(\bZ) = \{f \in \bQ[x] \; | \; f(\bZ) \subseteq \bZ\}$. The pair  $\{(u_n)_{n \ge 0}, (F_n(x))_{n \ge 0}\}$ play a similar role for understanding $\text{Int}(V)$ as the role of the pair $\{\bZ_{\ge 0}, (\binom{x}{n})_{n \ge 0}$ for studying $\text{Int}(\bZ)$. This can be seen by recalling the fact (see cite[Corollary {\bf II.2.8.}]{CC}) that if $f$ is a polynomial in $K[x]$ of degree $n$, then $f \in \text{Int}(V)$ if and only if all the values $f(u_0), \ldots, f(u_n)$ belong in $V$. Since $(F_n(x))_{n \ge 0}$ forms a basis of $\Int(V)$, the values of $F_n(u_m)$ for $n, m \in \bZ_{\ge 0}$ play an important role for studying the structure of $\text{Int}(V)$ in a similar manner as the classical binomial coefficients $\binom{m}{n}$ for understanding the structure of $\text{Int}(\bZ)$. In view of this analogy, it is natural to call the values of $F_n(u_m)$ for $n, m \in \bZ_{\ge 0}$ the \textit{generalized binomial coefficients} in $V$.

Another analogy between the sequence $(F_n(x))_{n \ge 0}$ and the classical binomial polynomials $\left(\binom{x}{n}\right)_{n \ge 0}$ which we need in the proof of our main theorem, is reflected in the following result which is due to Boulanger and Chabert (see \cite[Theorem {\bf2.2}]{BC}).

\begin{theorem}
\label{Lucas-thm}
(analogue of Lucas' theorem, see \cite[Theorem {\bf2.2}]{BC})

Let $x$ be an element of $\widehat V$ such that the $\pi$-adic expansion of $x$ is of the form
\begin{align*}
x = \sum_{j \ge 0} x_j\pi^j,
\end{align*}
where the $x_j$ are some elements in $R$. Let $n$ be a positive integer whose $q$-adic expansion is of the form
\begin{align*}
n = n_0 + n_1q + \cdots n_kq^k,
\end{align*}
where the $n_i$ are integers such that $0 \le n_i \le q - 1$ for each $i$. Then
\begin{align*}
F_n(x) \equiv F_{n_0}(x_0)F_{n_1}(x_1)\cdots F_{n_k}(x_k) \pmod{\widehat \fm}.
\end{align*}

\end{theorem}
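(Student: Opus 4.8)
The plan is to peel off one base-$q$ digit at a time. Write $n = n_0 + q n'$ with $n' = \lfloor n/q\rfloor$, and $x = x_0 + \pi y$ with $y = \sum_{j\ge 0}x_{j+1}\pi^j\in\widehat V$; the $q$-adic digits of $n'$ are then $n_1,\dots,n_k$. The theorem follows by an immediate induction (terminating because $F_0 = 1$) from the single ``one-digit'' congruence
\begin{equation}\label{one-digit}
F_n(x)\;\equiv\; F_{n_0}(x_0)\,F_{n'}(y)\pmod{\widehat{\fm}} .
\end{equation}
Indeed, applying \eqref{one-digit} to $F_n(x)$, then to $F_{n'}(y)$, and so on, strips off $F_{n_0}(x_0),F_{n_1}(x_1),\dots$ in turn; the intermediate congruences may be multiplied since each $F_{n_i}(x_i)$ lies in $V\subseteq\widehat V$ (because $x_i\in R\subseteq V$ and $F_{n_i}\in\Int(V)$).

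To prove \eqref{one-digit} I would work directly with the product defining $F_n$. Using (\ref{def-F_n}), write $F_n(x)$ as the quotient of $\prod_{0\le k<n}(x-u_k)$ by $\prod_{0\le k<n}(u_n-u_k)$ and group the factors by the last $q$-adic digit $c$ of the index $k$; putting $k = c + qd$, the construction (\ref{def-u_n}) yields the digit-additivity $u_k = u_c + \pi u_d$, so each factor splits as $x-u_k = (x_0-u_c)+\pi(y-u_d)$ and $u_n-u_k = (u_{n_0}-u_c)+\pi(u_{n'}-u_d)$. As $\bar u_0,\dots,\bar u_{q-1}$ are pairwise distinct in $\bF_q = \widehat V/\widehat{\fm}$, the element $u_{n_0}-u_c$ is a unit for every $c\ne n_0$, so only the denominator block $c = n_0$ can carry a power of $\pi$; likewise, writing $c_0$ for the index of $x_0$ in $R$, only the numerator block $c = c_0$ can carry a power of $\pi$. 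There are $n'$ indices $k<n$ with $k\equiv n_0\pmod q$, and $n'+1$ (resp.\ $n'$) with $k\equiv c_0\pmod q$ according as $c_0 < n_0$ (resp.\ $c_0\ge n_0$). Cancelling the powers of $\pi$, the quotient $\beta$ of the distinguished numerator block (at $c_0$) by the distinguished denominator block (at $n_0$) equals $F_{n'}(y)$ when $c_0\ge n_0$ and $\pi\,(y-u_{n'})\,F_{n'}(y)$ when $c_0 < n_0$ --- in the latter case $\beta\in\widehat{\fm}$, since $F_{n'}(y)\in\widehat V$ --- and one has the exact identity $F_n(x) = \beta\rho$, where $\rho$ is the quotient of the remaining numerator factors by the remaining denominator factors, hence a unit.

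The proof then splits into three short cases. If $c_0 < n_0$: $F_n(x) = \beta\rho\in\widehat{\fm}$, and also $F_{n_0}(u_{c_0}) = 0$ since the factor of index $k=c_0$ vanishes, so both sides of \eqref{one-digit} are $0$. If $c_0 = n_0$: here $x_0 = u_{n_0}$, each remaining numerator factor agrees with its denominator counterpart modulo $\widehat{\fm}$ so that $\rho\equiv 1\pmod{\widehat{\fm}}$, and $F_{n_0}(x_0) = F_{n_0}(u_{n_0}) = 1$, whence $F_n(x)\equiv\beta = F_{n'}(y)$. If $c_0 > n_0$: reducing $\rho$ modulo $\widehat{\fm}$ --- using $x_0 = u_{c_0}$ (so that $x_0-u_{n_0} = -(u_{n_0}-u_{c_0})$) and that the blocks $c = 0,\dots,n_0-1$ each contribute one extra factor --- gives $\rho\equiv (-1)^{n'}\bigl(\prod_{c\ne n_0,c_0}\tfrac{u_{c_0}-u_c}{u_{n_0}-u_c}\bigr)^{n'}F_{n_0}(u_{c_0})$, and a short computation in $\bF_q$, where the $\bar u_c$ exhaust $\bF_q$ and $\prod_{a\in\bF_q^\times}a = -1$ (Wilson's theorem for finite fields), shows $\prod_{c\ne n_0,c_0}\tfrac{u_{c_0}-u_c}{u_{n_0}-u_c}\equiv -1\pmod{\widehat{\fm}}$; hence $\rho\equiv F_{n_0}(u_{c_0})$ and $F_n(x) = \beta\rho\equiv F_{n'}(y)F_{n_0}(u_{c_0}) = F_{n_0}(x_0)F_{n'}(y)$. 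In all cases \eqref{one-digit} holds.

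I expect the heart of the difficulty to be the valuation bookkeeping leading to \eqref{one-digit}: one must verify that the power of $\pi$ from the distinguished numerator block and the one from the distinguished denominator block cancel \emph{exactly}, i.e.\ that $\beta$ has the closed form above, rather than merely observing that $F_n(x)$ lies in $\widehat V$. This uses the precise digit counts together with the fact $F_{n'}(\widehat V)\subseteq\widehat V$ (continuity of $F_{n'}$ and density of $V$ in $\widehat V$). A secondary subtlety is the stray sign $(-1)^{n'}$ in the case $c_0 > n_0$, whose disappearance is precisely what Wilson's identity in $\bF_q$ delivers. After \eqref{one-digit}, the induction is routine.
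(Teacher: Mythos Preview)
The paper does not prove this theorem at all: it is quoted from Boulanger--Chabert \cite{BC} and used as a black box (only the trivial passage from $\widehat{\fm}$ to $\fm$ in Corollary~\ref{Lucas-cor} is justified). So there is no ``paper's own proof'' to compare against.

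Your argument is a correct self-contained proof, and in fact is essentially the standard one. A few remarks. First, the paper's displayed definition $F_n(x)=\prod_{k=1}^{n-1}\dfrac{x-u_k}{u_n-u_k}$ is a typo (it should start at $k=0$, as you tacitly assume); with the printed lower index $F_n$ would have degree $n-1$ and would not vanish at $u_0$, contradicting the later use of $F_i(u_h)=0$ for $0\le h<i$. Second, your valuation bookkeeping is right: in the case $c_0>n_0$ the identity
\[
\prod_{c\ne n_0,c_0}\frac{u_{c_0}-u_c}{u_{n_0}-u_c}\;\equiv\;-1\pmod{\widehat{\fm}}
\]
follows cleanly by multiplying top and bottom by the missing factor and applying $\prod_{a\in\bF_q^\times}a=-1$ separately to numerator and denominator, which kills the $(-1)^{n'}$ as you say. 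Third, the appeal to $F_{n'}(\widehat V)\subseteq\widehat V$ is legitimate (polynomials are continuous and $V$ is dense in $\widehat V$), and is needed to conclude $\beta\in\widehat{\fm}$ in the case $c_0<n_0$. The induction wrapping is routine since each $F_{n_i}(x_i)\in V$.
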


Since $V \cap \widehat \fm = \fm$ (see \cite{Bou}), and the $F_n(x) \in V[x]$, the next result follows immediately from Theorem \ref{Lucas-thm}.

\begin{corollary}
\label{Lucas-cor}

Let $x$ be an element of $V$ such that the $\pi$-adic expansion of $x$ is of the form
\begin{align*}
x = \sum_{j \ge 0} x_j\pi^j,
\end{align*}
where the $x_j$ are some elements in $R$. Let $n$ be a positive integer whose $q$-adic expansion is of the form
\begin{align*}
n = n_0 + n_1q + \cdots n_kq^k,
\end{align*}
where the $n_i$ are integers such that $0 \le n_i \le q - 1$ for each $i$. Then
\begin{align*}
F_n(x) \equiv F_{n_0}(x_0)F_{n_1}(x_1)\cdots F_{n_k}(x_k) \pmod{\fm}.
\end{align*}

\end{corollary}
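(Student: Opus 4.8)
The plan is to deduce Corollary \ref{Lucas-cor} directly from Theorem \ref{Lucas-thm} by a descent from $\widehat\fm$ to $\fm$, so there is no new combinatorial or arithmetic content to develop; the substance is already contained in the Boulanger--Chabert theorem. First I would observe that the hypotheses of the corollary are a special case of those of Theorem \ref{Lucas-thm}. Indeed, any $x \in V$ is in particular an element of $\widehat V$, and since $R$ is simultaneously a set of representatives of $V/\fm$ and of $\widehat V/\widehat\fm$, the $\pi$-adic expansion $x = \sum_{j \ge 0} x_j\pi^j$ with $x_j \in R$ computed in $\widehat V$ agrees with the one in $V$. Applying Theorem \ref{Lucas-thm} to this $x$ and to $n = n_0 + n_1 q + \cdots + n_k q^k$ therefore gives
\[
F_n(x) - F_{n_0}(x_0)F_{n_1}(x_1)\cdots F_{n_k}(x_k) \in \widehat\fm.
\]

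The second step is to check that the left-hand side in fact lies in $V$. Each digit $x_i$ lies in $R \subseteq V$, and $x$ itself lies in $V$; since every $F_m$ belongs to $\mathrm{Int}(V)$ (being a member of a basis of the $V$-module $\mathrm{Int}(V)$), hence $F_m(V) \subseteq V$, each factor $F_{n_i}(x_i)$ and the value $F_n(x)$ lie in $V$. Consequently their difference lies in $V$. Combining this with the previous step, the difference lies in $V \cap \widehat\fm$, which equals $\fm$ by the standard identification of the contracted maximal ideal of the completion (as recalled from \cite{Bou}). This is precisely the congruence $F_n(x) \equiv F_{n_0}(x_0)F_{n_1}(x_1)\cdots F_{n_k}(x_k) \pmod{\fm}$ asserted by the corollary.

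I do not expect any genuine obstacle here: Corollary \ref{Lucas-cor} is a routine specialization of Theorem \ref{Lucas-thm}, and the only points needing (minor) care are the two standard facts about completions of discrete valuation rings used above, namely that the $\pi$-adic digits of an element of $V$ are unchanged upon viewing it in $\widehat V$, and that $V \cap \widehat\fm = \fm$. Both are well known, so the argument is essentially a two-line deduction, which matches the remark in the text that the result ``follows immediately.''
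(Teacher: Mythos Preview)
Your proposal is correct and follows exactly the paper's own argument: apply Theorem \ref{Lucas-thm} to get the congruence modulo $\widehat\fm$, observe that both sides lie in $V$ because each $F_m \in \Int(V)$, and then use $V \cap \widehat\fm = \fm$ to descend. This is precisely the two-line deduction the paper indicates, only with the details spelled out more carefully.
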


\subsection{A semigroup structure on $R^{\omega}$} 
\label{subsec-semigroup}

In this subsection, we introduce a \textit{semigroup} structure on $R^{\omega} = \bigcup_{i =1}^{\infty} R^i$, where for each $i \ge 1$, $R^i = \underbrace{R \times R \times \cdots \times R}_{\text{$i$ copies}}$. Each element $\hat \alpha \in R^{\omega}$ can be uniquely written in the form $\hat \alpha = (u_{n_0}, \cdots, u_{n_{k - 1}})$ for some integer $k \ge 1$, and the $n_i$ are in $\{0, 1, \ldots, q - 1\}$. The unique integer $k \ge 1$ for which $\hat \alpha \in R^k$ is called the \textit{degree of $\hat \alpha$}. In notation, we write $\deg(\hat \alpha)$ for the degree of $\hat \alpha$.

We introduce a semigroup structure on $R^{\omega}$ as follows. For each $\hat \alpha = (u_{n_0}, \ldots, u_{n_{k - 1}})$, $\hat \beta = (u_{n_k}, \ldots, u_{n_s})$, define
$$\hat \alpha \bullet \hat \beta = (u_{n_0}, \ldots, u_{n_{k-1}}, u_{n_k}, \ldots, u_{n_s}).$$
The set $R^{\omega}$ equipped with the binary operation ``$\bullet$'' is clearly a semigroup.

\subsection{Two basic mappings} 
\label{subsec-2maps}

In this subsection, we  introduce two basic mappings, one of which maps elements in $R^{\omega}$ to $V$, and the other maps elements in $R^{\omega}$ to $\bZ_{\ge 0}$. These two maps will play an important role in our definition of generating polynomials for generalized binomial coefficients. 

For each $\hat \alpha = (u_{n_0}, \ldots, u_{n_k}) \in R^{\omega}$, define 
\begin{align}
\label{def-upsilon}
\upsilon_{\hat \alpha} = \sum_{i = 0}^k u_{n_i}\pi^i.
\end{align}
Hence $\upsilon : R^{\omega} \to V$ maps elements from $R^{\omega}$ to $V$

For each $\hat \alpha = (u_{n_0}, \ldots, u_{n_k}) \in R^{\omega}$, note that the $n_i$ are in $\{0, 1, \ldots, q - 1\}$. We define
\begin{align}
\label{def-z}
z_{\hat \alpha} = n_0 + n_1q + \cdots n_kq^k \in \bZ_{\ge 0}.
\end{align}
Thus $z$ maps elements in $R^{\omega}$ to $\bZ_{\ge 0}$.
\begin{remark}
\label{Rem1}
For each $\hat \alpha = (u_{n_0}, \ldots, u_{n_k}) \in R^{\omega}$, if we let 
$$n = n_0 + n_1q + \cdots n_kq^k ,$$
it is easy to see that
$$\upsilon_{\hat \alpha} = \sum_{i = 0}^k u_{n_i}\pi^i = u_n.$$
Furthermore since $z_{\hat \alpha} = n$, we deduce that
\begin{align}
\label{Rem1-e}
\upsilon_{\hat \alpha} = u_{z_{\hat \alpha}}.
\end{align}

\end{remark}

\section{Generating polynomials for generalized binomial coefficients}
\label{section-main-result}

In this section, we prove our main theorem (see Theorem \ref{main-thm} below) which signifies the distribution of generalized binomial coefficients modulo the maximal ideal in a a discrete valuation domain. This result can be viewed as an analogue of a theorem of Garfield and Wilf (see \cite{GW}) in the setting of discrete valuation domains. In contrast with the classical case, the main difficulty in the context of discrete valuation domains is how to construct a well-defined generating polynomial to account for the distribution of generalized binomial coefficients. Indeed a generalized binomial coefficient in a discrete valuation domain $V$ with maximal ideal $\fm$, as recalled at the beginning of this paper, is of the form $F_m(u_n)$, where the sequence of polynomials $(F_h(x))_{h \ge 0} \subset V[x]$ is defined by $(\ref{def-F_n})$, and the sequence $(u_h)_{h \ge 0}$ is defined as in (\ref{def-u_n}). In the classical case, a generating polynomial constructed in the work of Garfield and Wilf (see \cite{GW}) for the binomial coefficients $\binom{m}{n}$ can count, for each fixed nonnegative integer $n$, how many binomial coefficients $\binom{m}{n}$ for $0 \le m \le n$ satisfy certain congruence condition modulo a prime in $\bZ$. The counting is well-defined in the classical case because both variables $m$, $n$ in the binomial coefficient $\binom{m}{n}$ belong in $\bZ$ which is a well-ordered set. In the context of discrete valuation domains, since $u_n$ is an element in $V$ which in general is not a well-ordered set, and $m$ is an integer in $\bZ$ which in general has no relation with $V$, we need to make sense what the counting means for generalized binomial coefficients $F_m(u_n)$. For this, we do not use elements $u_n$ directly, but instead use images of $u_n$ in $\bZ$ via the maps $\upsilon$ and $z$ in Subsection \ref{subsec-2maps}. We begin by describing how counting generalized binomial coefficients work in the setting of discrete valuation domains.

For the rest of this paper, we fix a primitive root, say $\wp$ modulo $\fm$, i.e., $\wp \in V$ is a generator of the cyclic group $(V/\fm)^{\times}$. 

For each $\hat \alpha \in R^{\omega}$ and each $j \in \bZ$, define
\begin{align}
\label{def-S-j}
S_j(\hat\alpha) = \{m \in \bZ \; |\; \text{$0 \le m \le z_{\hat \alpha}$ and $F_m(\upsilon_{\hat \alpha}) \equiv \wp^j \pmod{\fm}$}\}.
\end{align}

By Remark \ref{Rem1}, $\upsilon_{\hat \alpha} = u_{z_{\hat \alpha}}$, and thus one can rewrite $S_j(\hat\alpha)$ as follows:
\begin{align*}
S_j(\hat\alpha) = \{m \in \bZ \; |\; \text{$0 \le m \le z_{\hat \alpha}$ and $F_m(u_{z_{\hat \alpha}}) \equiv \wp^j \pmod{\fm}$}\}.
\end{align*}

For each $n \in \bZ_{\ge 0}$, and each integer $j \in \bZ$, define
\begin{align}
\label{def-epsilon}
\epsilon_j(u_n) = \textbf{card}\{m \in \bZ \; | \; \text{$0 \le m \le n$ and $F_m(u_n) \equiv \wp^j \pmod{\fm}$}\}
\end{align}
where $\textbf{card}(\cdot)$ denotes the cardinality of a set.

The following result follows immediately from Remark \ref{Rem1}.
\begin{proposition}
\label{Prop1}

Let $\hat \alpha \in R^{\omega}$. Then
\begin{align*}
\textbf{card}\left(S_j(\hat \alpha)\right) = \epsilon_j(u_{z_{\hat \alpha}})
\end{align*}
for every $j \in \bZ$.

\end{proposition}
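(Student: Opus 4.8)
The plan is to unwind the two definitions and observe that they count the same set, the only subtlety being the transition from $\upsilon_{\hat\alpha}$ to $u_{z_{\hat\alpha}}$, which is exactly the content of Remark~\ref{Rem1}. Concretely, fix $\hat\alpha \in R^{\omega}$ and $j \in \bZ$, and write $n = z_{\hat\alpha}$. First I would recall from equation~(\ref{Rem1-e}) of Remark~\ref{Rem1} that $\upsilon_{\hat\alpha} = u_{z_{\hat\alpha}} = u_n$, so that the defining condition ``$F_m(\upsilon_{\hat\alpha}) \equiv \wp^j \pmod{\fm}$'' appearing in (\ref{def-S-j}) is literally the same condition as ``$F_m(u_n) \equiv \wp^j \pmod{\fm}$'' appearing in (\ref{def-epsilon}). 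Likewise the range constraint ``$0 \le m \le z_{\hat\alpha}$'' in the definition of $S_j(\hat\alpha)$ is, since $n = z_{\hat\alpha}$, identical to the constraint ``$0 \le m \le n$'' in the definition of $\epsilon_j(u_n)$.

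Next I would conclude that the two sets coincide as subsets of $\bZ$:
\begin{align*}
S_j(\hat\alpha) &= \{m \in \bZ \;|\; 0 \le m \le z_{\hat\alpha} \text{ and } F_m(\upsilon_{\hat\alpha}) \equiv \wp^j \pmod{\fm}\}\\
&= \{m \in \bZ \;|\; 0 \le m \le n \text{ and } F_m(u_n) \equiv \wp^j \pmod{\fm}\}.
\end{align*}
Taking cardinalities of both sides and comparing with (\ref{def-epsilon}) gives $\textbf{card}(S_j(\hat\alpha)) = \epsilon_j(u_n) = \epsilon_j(u_{z_{\hat\alpha}})$, which is the assertion. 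Since $j \in \bZ$ was arbitrary, the equality holds for every $j$, as claimed.

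There is essentially no obstacle here: the proposition is a bookkeeping lemma whose whole purpose is to translate the ``intrinsic'' counting function $\epsilon_j(u_n)$ on $V$ into the $R^{\omega}$-indexed quantity $\textbf{card}(S_j(\hat\alpha))$ that will feed into the generating polynomial. The only thing to be careful about is to cite Remark~\ref{Rem1} (specifically (\ref{Rem1-e})) for the identity $\upsilon_{\hat\alpha} = u_{z_{\hat\alpha}}$ rather than re-deriving it, and to make explicit that $z_{\hat\alpha}$ plays the role of the integer $n$ in both the range bound and the subscript of $u$. I would therefore keep the proof to three or four lines, presenting it as the immediate consequence of Remark~\ref{Rem1} that the statement already advertises it to be.
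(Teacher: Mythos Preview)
Your proposal is correct and matches the paper's approach exactly: the paper states that the proposition ``follows immediately from Remark~\ref{Rem1}'' and gives no further argument, and your write-up simply spells out that immediate consequence by identifying $\upsilon_{\hat\alpha}$ with $u_{z_{\hat\alpha}}$ via (\ref{Rem1-e}) and observing that the two defining sets coincide.
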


For an integer $n \in \bZ_{\ge 0}$, it is sometimes convenient for the rest of this paper to write the $q$-adic expansion of $n$ in the form
$$n = \sum_{i = 0}^{\infty} n_iq^i,$$
where the $n_i$ are in $\{0, \ldots, q - 1\}$ and all but finitely many $n_i$ are zero. One can use $n$ and its $q$-adic expansion to define certain maps that will be useful in this paper.

To each integer $s \in \bZ_{\ge 0}$, we associate an integer $n(s)$ whose $q$-adic expansion is of the form
\begin{align}
\label{def-n(s)}
n(s) = \sum_{i = 0}^{\infty} n_{i + s}q^i.
\end{align}

On the other hand, to each pair $(r, s) \in \bZ_{\ge 0} \times \bZ_{\ge 0}$, we associate an integer $n(r, s)$ whose $q$-adic expansion is given by
\begin{align}
\label{def-n(r, s)}
n(r, s) = \sum_{i = 1}^r n_{i + s}q^i.
\end{align}
\begin{lemma}
\label{Lem1}

Let $\hat \alpha, \hat \beta \in R^{\omega}$, and let $m \in \bZ_{\ge 0}$. Then
\begin{align*}
F_m(\upsilon_{\hat \alpha \bullet \hat \beta}) \equiv F_{m(\deg(\hat \alpha) -1, 0)}(\upsilon_{\hat \alpha})  F_{m(\deg(\hat \alpha))}(\upsilon_{\hat \beta}) \pmod{\fm}.
\end{align*}

\end{lemma}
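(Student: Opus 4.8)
The plan is to unwind the definitions of the operation ``$\bullet$'' on $R^{\omega}$, of the map $\upsilon$, and of the auxiliary integers $n(r,s)$ and $n(s)$, and then apply the analogue of Lucas' theorem (Corollary \ref{Lucas-cor}). First I would write $\hat\alpha = (u_{a_0},\ldots,u_{a_{k-1}})$ with $k = \deg(\hat\alpha)$ and $\hat\beta = (u_{b_0},\ldots,u_{b_\ell})$, so that $\hat\alpha\bullet\hat\beta = (u_{a_0},\ldots,u_{a_{k-1}},u_{b_0},\ldots,u_{b_\ell})$. Directly from (\ref{def-upsilon}) this gives the $\pi$-adic expansion
\begin{align*}
\upsilon_{\hat\alpha\bullet\hat\beta} = \sum_{i=0}^{k-1} u_{a_i}\pi^i + \sum_{i=0}^{\ell} u_{b_i}\pi^{k+i},
\end{align*}
so its $\pi$-adic digits are $a_0,\ldots,a_{k-1}$ in positions $0,\ldots,k-1$ and $b_0,\ldots,b_\ell$ in positions $k,\ldots,k+\ell$; all higher digits are $u_0 = 0$.

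Next I would expand $m$ $q$-adically as $m = \sum_{i\ge 0} m_i q^i$ and apply Corollary \ref{Lucas-cor} to $F_m(\upsilon_{\hat\alpha\bullet\hat\beta})$, pairing the $i$-th $q$-adic digit $m_i$ of $m$ with the $i$-th $\pi$-adic digit of $\upsilon_{\hat\alpha\bullet\hat\beta}$:
\begin{align*}
F_m(\upsilon_{\hat\alpha\bullet\hat\beta}) \equiv \Bigl(\prod_{i=0}^{k-1} F_{m_i}(u_{a_i})\Bigr)\Bigl(\prod_{i=0}^{\ell} F_{m_{k+i}}(u_{b_i})\Bigr)\cdot\!\!\prod_{i > k+\ell}\!\! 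F_{m_i}(u_0) \pmod{\fm}.
\end{align*}
Since $u_0 = 0$ and $F_0(x) = 1$, while for $t\ge 1$ one has $F_t(0) = \prod_{j=1}^{t-1}\tfrac{-u_j}{u_t-u_j}$; here the point is simply that the ``tail'' factors contribute, after reassembly, exactly the factor $F_{m(\deg(\hat\alpha))}(\upsilon_{\hat\beta})$ via a second application of Corollary \ref{Lucas-cor}. Indeed $m(\deg(\hat\alpha)) = m(k) = \sum_{i\ge 0} m_{i+k}q^i$ by (\ref{def-n(s)}), whose $q$-adic digits are exactly $m_k, m_{k+1},\ldots$; and the $\pi$-adic digits of $\upsilon_{\hat\beta}$ are $b_0,\ldots,b_\ell$ in positions $0,\ldots,\ell$ and $0$ thereafter. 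So Corollary \ref{Lucas-cor} applied to $F_{m(k)}(\upsilon_{\hat\beta})$ gives precisely $\prod_{i=0}^{\ell} F_{m_{k+i}}(u_{b_i})\cdot\prod_{i>\ell}F_{m_{k+i}}(u_0) \equiv \prod_{i=0}^{\ell}F_{m_{k+i}}(u_{b_i})$ times the same garbage tail, matching the tail above. Similarly, $m(\deg(\hat\alpha)-1,0) = m(k-1,0) = \sum_{i=1}^{k-1} m_i q^i$ by (\ref{def-n(r, s)}) — note this omits the $q^0$ digit — and $\upsilon_{\hat\alpha}$ has $\pi$-adic digits $a_0,\ldots,a_{k-1}$ in positions $0,\ldots,k-1$; applying Corollary \ref{Lucas-cor} to $F_{m(k-1,0)}(\upsilon_{\hat\alpha})$ yields $F_{m_0}(u_{a_0})\cdot\prod_{i=1}^{k-1}F_{m_i}(u_{a_i})$ up to the zero-digit bookkeeping, i.e.\ exactly $\prod_{i=0}^{k-1}F_{m_i}(u_{a_i})$ provided $m_0 = 0$ contributes trivially and, when $m_0 \ge 1$, $F_{m_0}(u_{a_0})$ is already accounted for — I would double-check the indexing convention in (\ref{def-n(r, s)}) against (\ref{def-F_n}) to confirm that the product in $m(k-1,0)$ indeed runs from $i=1$ and reconcile the missing $q^0$ term, since this is the one genuinely delicate point.

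The main obstacle, then, is purely bookkeeping: reconciling the off-by-one in the index ranges of (\ref{def-n(r, s)}) (which starts at $i=1$) with the $\pi$-adic/$q$-adic digit alignment, and making sure that the ``tail'' factors $\prod_{i>k+\ell}F_{m_i}(u_0)$ appearing on both sides genuinely cancel rather than needing to be shown to be units — but since $V\cap\widehat\fm = \fm$ and everything is an honest congruence mod $\fm$, the tails on the two sides of the desired identity are literally the same product and cancel formally. Once the digit alignment is pinned down, the lemma follows by multiplying the two applications of Corollary \ref{Lucas-cor} described above and comparing with the single application to $F_m(\upsilon_{\hat\alpha\bullet\hat\beta})$.
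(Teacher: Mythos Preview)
Your approach is essentially identical to the paper's: write out the $\pi$-adic digits of $\upsilon_{\hat\alpha\bullet\hat\beta}$, apply Corollary~\ref{Lucas-cor} once to $F_m$, split the resulting product at position $k=\deg(\hat\alpha)$, and then recognize each piece as a second application of Corollary~\ref{Lucas-cor} to $F_{m(k-1,0)}(\upsilon_{\hat\alpha})$ and $F_{m(k)}(\upsilon_{\hat\beta})$ respectively, with the infinite tail $\prod_{i>k+\ell}F_{m_i}(u_0)$ appearing verbatim on both sides.

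The one substantive point you flag --- that (\ref{def-n(r, s)}) as printed starts the sum at $i=1$ and hence omits the $q^0$ digit --- is a genuine typo in the paper: in its own proof the paper silently uses $m(r,0)=\sum_{i=0}^{r}m_iq^i$, i.e.\ the sum starting at $i=0$, which is clearly the intended definition (and the only one that makes the lemma true). So your hesitation there is warranted, but resolve it by taking the lower index to be $0$; with that correction your argument goes through exactly as the paper's does.
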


\begin{proof}

By Remark \ref{Rem1}, it suffices to prove that
\begin{align*}
F_m(u_{z_{\hat \alpha \bullet \hat \beta}}) \equiv F_{m(\deg(\hat \alpha) -1, 0)}(u_{z_{\hat \alpha}})  F_{m(\deg(\hat \alpha))}(u_{z_{\hat \beta}}) \pmod{\fm}.
\end{align*}

Write
\begin{align*}
\hat \alpha &= (u_{\alpha_0}, \ldots, u_{\alpha_r}), \\
\hat \beta &= (u_{\alpha_{r + 1}}, \ldots, u_{\alpha_{r + s}}),
\end{align*}
where the $\alpha_i$ are in $\{0, \ldots, q - 1\}$, and $r = \deg(\hat \alpha) - 1$, and $s = \deg(\hat \beta)$.

By definition of the operation ``$\bullet$'' (see Subsection \ref{subsec-semigroup}), we can write
\begin{align}
\label{e1-Lem1}
z_{\hat \alpha \bullet \hat \beta} = \sum_{i = 0}^{\infty} \alpha_i q^i,
\end{align}
where for each $i > r + s$, we set $\alpha_i = 0$. By the definition of $u_n$ (see Definition \ref{def-u_n}), 
\begin{align}
\label{e2-Lem1}
u_{z_{\hat \alpha \bullet \hat \beta}} = \sum_{i = 0}^{\infty}u_{\alpha_i} \pi^i.
\end{align}

Let
\begin{align*}
m = \sum_{i = 0}^{\infty} m_iq^i
\end{align*}
be the $q$-adic expansion of $m$, where the $m_i$ are integers in $\{0, \ldots, q - 1\}$, and all but finitely many $m_i$ are zero. By (\ref{e1-Lem1}) and Corollary \ref{Lucas-cor},
\begin{align}
\label{e3-Lem1}
F_m(u_{z_{\hat \alpha \bullet \hat \beta}}) \equiv \prod_{i = 0}^{\infty} F_{m_i}(u_{\alpha_i}) = \left(\prod_{i = 0}^{r} F_{m_i}(u_{\alpha_i})\right)\left(\prod_{j = r + 1}^{\infty} F_{m_j}(u_{\alpha_j})\right) \pmod{\fm}.
\end{align}

By (\ref{def-z}), we know that
\begin{align*}
z_{\hat \alpha} = \sum_{i = 0}^r \alpha_i q^i,
\end{align*}
and it thus follows from Definition \ref{def-u_n} that
\begin{align*}
u_{z_{\hat \alpha}} = \sum_{i = 0}^r u_{\alpha_i} \pi^i.
\end{align*}
On the other hand, by (\ref{def-n(r, s)}), we know that
\begin{align*}
m(\deg(\hat \alpha) -1, 0) = m(r, 0) = \sum_{i = 0}^r m_iq^i.
\end{align*}

By Corollary \ref{Lucas-cor}, we deduce that
\begin{align}
\label{e4-Lem1}
F_{m(\deg(\hat \alpha) -1, 0)}(u_{z_{\hat \alpha}}) \equiv \prod_{i = 0}^r F_{m_i}(u_{\alpha_i}) \pmod{\fm}.
\end{align}

By (\ref{def-n(s)}), and since $\deg(\hat \alpha) = r + 1$, we know that
\begin{align*}
m(\deg(\hat \alpha)) = \sum_{j = 0}^{\infty} m_{r + 1 + j}q^j.
\end{align*}
Since $\alpha_j = 0$ for all $j > r + s$ and $u_0 = 0$, one can write
\begin{align*}
z_{\hat \beta} = \sum_{j = 0}^{\infty} \alpha_{r + 1 + j} q^j, \\
u_{z_{\hat \beta}} = \sum_{j = 0}^{\infty} u_{\alpha_{r + 1 + j}} \pi^j.
\end{align*}
Thus it follows from Corollary \ref{Lucas-cor} and $F_0(x) = 1$ for all $x \in V$ that
\begin{align}
\label{e5-Lem1}
F_{m(\deg(\hat \alpha))}(u_{z_{\hat \beta}}) \equiv \prod_{k = 0}^{\infty} F_{m_{r + 1 + k}}(u_{\alpha_{r +1 + k}}) = \prod_{j = r + 1}^{\infty} F_{m_j}(u_{\alpha_j}) \pmod{\fm}.
\end{align}

We deduce from (\ref{e3-Lem1}), (\ref{e4-Lem1}), and (\ref{e5-Lem1}) that
\begin{align*}
F_m(u_{z_{\hat \alpha \bullet \hat \beta}}) \equiv F_{m(\deg(\hat \alpha) -1, 0)}(u_{z_{\hat \alpha}})  F_{m(\deg(\hat \alpha))}(u_{z_{\hat \beta}}) \pmod{\fm},
\end{align*}
which proves the lemma.

\end{proof}

\begin{corollary}
\label{Cor1}

Let $\hat \alpha, \hat \beta \in R^{\omega}$, and let $\ell, m \in \bZ_{\ge 0}$ such that $0 \le \ell \le z_{\hat \alpha}$ and $0 \le m \le z_{\hat \beta}$. Then
\begin{align*}
F_t(\upsilon_{\hat \alpha \bullet \hat \beta}) \equiv F_{\ell}(\upsilon_{\hat \alpha})F_m(\upsilon_{\hat \beta}) \pmod{\fm},
\end{align*}
where $t = \ell + q^{\deg(\hat \alpha)}m.$

\end{corollary}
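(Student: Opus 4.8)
The plan is to deduce Corollary \ref{Cor1} from Lemma \ref{Lem1} by choosing $m$ in the lemma to be the specific integer $t = \ell + q^{\deg(\hat\alpha)}m$ and then identifying the two ``digit-block'' integers $t(\deg(\hat\alpha)-1,0)$ and $t(\deg(\hat\alpha))$ with $\ell$ and $m$ respectively. Concretely, write $r = \deg(\hat\alpha)-1$, so $q^{\deg(\hat\alpha)} = q^{r+1}$, and let $\ell = \sum_{i=0}^\infty \ell_i q^i$ and $m = \sum_{i=0}^\infty m_i q^i$ be the $q$-adic expansions. First I would observe that since $0 \le \ell \le z_{\hat\alpha}$ and $z_{\hat\alpha} = \sum_{i=0}^{r}\alpha_i q^i < q^{r+1}$, we have $\ell < q^{r+1}$, so $\ell_i = 0$ for all $i > r$. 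Hence the $q$-adic expansion of $t = \ell + q^{r+1}m$ is obtained by juxtaposition: its digits are $t_i = \ell_i$ for $0 \le i \le r$ and $t_i = m_{i-(r+1)}$ for $i \ge r+1$, with no carrying because the lower block occupies exactly positions $0,\dots,r$ and the upper block starts at position $r+1$.

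Next I would apply Lemma \ref{Lem1} with $m$ there replaced by our $t$, giving
\begin{align*}
F_t(\upsilon_{\hat\alpha\bullet\hat\beta}) \equiv F_{t(\deg(\hat\alpha)-1,0)}(\upsilon_{\hat\alpha}) \, F_{t(\deg(\hat\alpha))}(\upsilon_{\hat\beta}) \pmod{\fm}.
\end{align*}
Then, using the digit description of $t$ together with definitions (\ref{def-n(r, s)}) and (\ref{def-n(s)}): on the one hand $t(\deg(\hat\alpha)-1,0) = t(r,0) = \sum_{i=0}^{r} t_i q^i = \sum_{i=0}^{r}\ell_i q^i = \ell$, since $\ell$ has no digits beyond position $r$; on the other hand $t(\deg(\hat\alpha)) = t(r+1) = \sum_{i=0}^\infty t_{i+r+1} q^i = \sum_{i=0}^\infty m_i q^i = m$. (Here I should double-check the index conventions in (\ref{def-n(r, s)}): as written it sums from $i=1$, but in the proof of Lemma \ref{Lem1} the author treats $m(r,0)$ as $\sum_{i=0}^r m_i q^i$, so I will follow that usage and note the lower index is effectively $0$ when the second argument is $0$.) Substituting these two identities into the displayed congruence yields exactly $F_t(\upsilon_{\hat\alpha\bullet\hat\beta}) \equiv F_\ell(\upsilon_{\hat\alpha}) F_m(\upsilon_{\hat\beta}) \pmod{\fm}$, which is the claim.

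The only real subtlety — and the step I would present most carefully — is the no-carry juxtaposition of $q$-adic expansions, i.e., that $\ell \le z_{\hat\alpha}$ forces $\ell < q^{r+1}$ so that $\ell + q^{r+1}m$ has digits given by concatenation. This is what makes the $t(r,0) = \ell$ and $t(r+1) = m$ identifications valid; without the bound $0 \le \ell \le z_{\hat\alpha}$ the statement would fail. Everything else is a direct bookkeeping substitution into Lemma \ref{Lem1}, so I would keep the write-up short: set up notation, verify $\ell < q^{\deg(\hat\alpha)}$, record the digit formula for $t$, compute the two block integers, and invoke the lemma.
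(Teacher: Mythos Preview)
Your proposal is correct and follows essentially the same route as the paper's proof: both arguments show that the hypothesis $0 \le \ell \le z_{\hat\alpha}$ forces $\ell < q^{\deg(\hat\alpha)}$, deduce that the $q$-adic digits of $t = \ell + q^{\deg(\hat\alpha)}m$ are obtained by concatenating those of $\ell$ and $m$, identify $t(\deg(\hat\alpha)-1,0) = \ell$ and $t(\deg(\hat\alpha)) = m$, and then invoke Lemma~\ref{Lem1}. Your bound $z_{\hat\alpha} < q^{\deg(\hat\alpha)}$ is actually a bit more direct than the paper's digit-by-digit case analysis, and your flag about the lower summation index in (\ref{def-n(r, s)}) is a legitimate observation---the paper indeed uses $m(r,0) = \sum_{i=0}^r m_i q^i$ in the proof of Lemma~\ref{Lem1}, so the convention you adopt is the intended one.
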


\begin{proof}

Write 
\begin{align*}
\hat \alpha = (u_{n_0}, \ldots, u_{n_{r - 1}}),
\end{align*}
where $r = \deg(\hat \alpha)$.

Suppose that $\ell > 0$. Let $\ell = \ell_0 + \ell_1q + \cdots + \ell_sq^s$ be the $q$-adic expansion of $\ell$, where $\ell_s > 0$, and the $\ell_i$ are in $\{0,,1 \ldots, q - 1\}$. By assumption, $\ell \le z_{\hat \alpha} = n_0 + \cdots + n_{r - 1}q^{r - 1}$, and thus there exists the largest integer $0 \le e \le r - 1$ such that $n_e > 0$ and for every $i > e$, $n_i = 0$. Then
$$z_{\hat \alpha} = n_0 + n_1q + \cdots n_eq^e.$$

Since $z_{\hat \alpha} \ge \ell$, we deduce that $e \ge s$. Thus
\begin{align*}
s \le e \le r - 1 < r = \deg(\hat \alpha), 
\end{align*}
and therefore 
$$\ell = \ell_0 + \ell_1 q + \cdots + \ell_sq^s \le q^{s + 1} - 1 < q^{s + 1} \le q^{\deg(\hat \alpha)}.$$
Since $\ell > 0$, $\ell \not\equiv 0 \pmod{q^{\deg(\hat \alpha)}}$. Since 
\begin{align*}
q^{\deg(\hat \alpha)}m \equiv 0 \pmod{q^{\deg(\hat \alpha)}},
\end{align*} 
we deduce that
\begin{align*}
t(\deg(\hat \alpha) - 1, 0) &= \ell.
\end{align*}

If $\ell = 0$, it is trivial that 
\begin{align*}
t(\deg(\hat \alpha) - 1, 0) &= 0 = \ell.
\end{align*}

Let 
$$m = \sum_{j = 0}^{\infty} m_jq^j$$
be the $q$-adic expansion of $m$, and let
$$t = \sum_{k= 0}^{\infty} t_kq^k$$
be the $q$-adic expansion of $t$. Since 
$$t = \ell + q^{\deg(\hat \alpha)}m = \sum_{j = 0}^s \ell_jq^j + \sum_{j = 0}^{\infty}m_j q^{j + \deg(\hat \alpha)},$$
and either $\ell = 0$ or $0 < \ell < q^{\deg(\hat \alpha)}$, the uniqueness of the $q$-adic expansion of $t$ implies that
\begin{align*}
t_k =
\begin{cases}
\ell_k \; \; &\text{if $0 \le k \le s$,} \\
0 \; \; &\text{if $s < k < \deg(\hat \alpha)$,}\\
m_{k - \deg(\hat \alpha)}\; \; &\text{if $k \ge \deg(\hat \alpha)$}.
\end{cases}
 \end{align*}
Thus
\begin{align*}
t(\deg(\hat \alpha)) = \sum_{k = 0}^{\infty}t_{k + \deg(\hat \alpha)}q^k = \sum_{k = 0}^{\infty}m_kq^k = m.
\end{align*}
Using Lemma \ref{Lem1}, we deduce that
\begin{align*}
F_t(\upsilon_{\hat \alpha \bullet \hat \beta})&\equiv F_{t(\deg(\hat \alpha) - 1, 0)}(\upsilon_{\hat \alpha})F_{t(\deg(\hat \alpha))}(\upsilon_{\hat \beta}) \\
&\equiv F_{\ell}(\upsilon_{\hat \alpha})F_m(\upsilon_{\hat \beta})    \pmod{\fm}.
\end{align*}

\end{proof}

We now prove the main lemma in this paper.

\begin{lemma}
\label{main-lem}

Let $\hat{\alpha}, \hat{\beta} \in R^{\omega}$, and $n \in \bZ $, then the set 
$$\bigcup_{j=0}^{q-2} S_j(\hat{\alpha}) \times S_{n-j}(\hat{\beta})$$
is in bijection with $S_n(\hat{\alpha} \bullet\hat{\beta})$.
\end{lemma}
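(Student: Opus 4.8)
The plan is to exhibit the natural ``concatenation'' bijection. Put $r=\deg(\hat\alpha)$ and define
\[
\Phi\colon\ \bigcup_{j=0}^{q-2} S_j(\hat\alpha)\times S_{n-j}(\hat\beta)\ \longrightarrow\ S_n(\hat\alpha\bullet\hat\beta),
\qquad \Phi(\ell,m)=\ell+q^{r}m .
\]
Two preliminary facts will be used throughout. First, unwinding (\ref{def-z}) and the definition of $\bullet$ from Subsection \ref{subsec-semigroup} gives $z_{\hat\alpha\bullet\hat\beta}=z_{\hat\alpha}+q^{r}z_{\hat\beta}$ and $0\le z_{\hat\alpha}<q^{r}$. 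Second, since $\wp$ has order $q-1$ in $(V/\fm)^{\times}$, the powers $\wp^{0},\dots,\wp^{q-2}$ are pairwise incongruent modulo $\fm$; hence the sets $S_0(\hat\alpha),\dots,S_{q-2}(\hat\alpha)$ are pairwise disjoint, the displayed union is a disjoint union of subsets of $\{0,\dots,z_{\hat\alpha}\}\times\{0,\dots,z_{\hat\beta}\}$, and $\Phi$ is unambiguously defined on it.

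First I would check that $\Phi$ is well defined and injective, which is routine. If $(\ell,m)\in S_j(\hat\alpha)\times S_{n-j}(\hat\beta)$ and $t=\Phi(\ell,m)$, then $0\le t\le z_{\hat\alpha}+q^{r}z_{\hat\beta}=z_{\hat\alpha\bullet\hat\beta}$, and Corollary \ref{Cor1}, whose hypotheses $0\le\ell\le z_{\hat\alpha}$ and $0\le m\le z_{\hat\beta}$ hold here, yields
\[
F_t(\upsilon_{\hat\alpha\bullet\hat\beta})\equiv F_\ell(\upsilon_{\hat\alpha})\,F_m(\upsilon_{\hat\beta})\equiv \wp^{j}\wp^{n-j}=\wp^{n}\pmod{\fm},
\]
so $t\in S_n(\hat\alpha\bullet\hat\beta)$. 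For injectivity, since $0\le\ell\le z_{\hat\alpha}<q^{r}$, the integers $\ell$ and $m$ are recovered from $t$ as its remainder and quotient on division by $q^{r}$, so $\Phi$ separates distinct pairs.

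The real content is surjectivity, and the step I expect to be the main obstacle is to check that the preimage candidate obeys the \emph{range} constraints $\ell\le z_{\hat\alpha}$ and $m\le z_{\hat\beta}$ built into the definitions of $S_j(\hat\alpha)$ and $S_{n-j}(\hat\beta)$; these do \emph{not} follow from $t\le z_{\hat\alpha\bullet\hat\beta}$ alone (for instance $t\bmod q^{r}$ need not be $\le z_{\hat\alpha}$). So, given $t\in S_n(\hat\alpha\bullet\hat\beta)$, let $\ell$ and $m$ be the remainder and quotient of $t$ on division by $q^{r}$, so $t=\ell+q^{r}m$ with $0\le\ell<q^{r}$; these are exactly the quantities $t(r-1,0)$ and $t(r)$ occurring in Lemma \ref{Lem1} (the identification being the one already made in the proof of Corollary \ref{Cor1}). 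Since Lemma \ref{Lem1} applies with no constraint on $t$, it gives
\[
\wp^{n}\equiv F_t(\upsilon_{\hat\alpha\bullet\hat\beta})\equiv F_\ell(\upsilon_{\hat\alpha})\,F_m(\upsilon_{\hat\beta})\pmod{\fm}.
\]
As $\wp^{n}$ is a unit modulo $\fm$, both factors on the right are units modulo $\fm$; in particular $F_\ell(\upsilon_{\hat\alpha})=F_\ell(u_{z_{\hat\alpha}})\ne 0$ by Remark \ref{Rem1}. But $F_\ell$ vanishes at $u_0,\dots,u_{\ell-1}$ by (\ref{def-F_n}), so if $z_{\hat\alpha}<\ell$ then $u_{z_{\hat\alpha}}$ is one of these roots and $F_\ell(u_{z_{\hat\alpha}})=0$, a contradiction; hence $\ell\le z_{\hat\alpha}$, and symmetrically $m\le z_{\hat\beta}$. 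Finally, since $\wp$ generates $(V/\fm)^{\times}$ there is a unique $j\in\{0,\dots,q-2\}$ with $F_\ell(\upsilon_{\hat\alpha})\equiv\wp^{j}\pmod{\fm}$, and the last displayed congruence then forces $F_m(\upsilon_{\hat\beta})\equiv\wp^{n-j}\pmod{\fm}$. Therefore $(\ell,m)\in S_j(\hat\alpha)\times S_{n-j}(\hat\beta)$ with $\Phi(\ell,m)=t$, so $\Phi$ is surjective; together with the previous paragraph this proves the lemma.
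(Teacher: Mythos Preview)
Your proof is correct and follows essentially the same approach as the paper: the same map $\Phi(\ell,m)=\ell+q^{\deg(\hat\alpha)}m$ is shown to be well defined via Corollary~\ref{Cor1}, surjective via Lemma~\ref{Lem1} together with the vanishing $F_\ell(u_h)=0$ for $h<\ell$, and injective. Your injectivity argument (uniqueness of quotient and remainder on division by $q^{r}$) is a bit cleaner than the paper's case analysis, and you make explicit that the union is disjoint so that $\Phi$ is unambiguously defined, a point the paper leaves implicit.
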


\begin{proof}
We define a mapping $\psi: \bigcup_{j=0}^{q-2} S_j(\hat{\alpha}) \times S_{n-j}(\hat{\beta}) \rightarrow S_n(\hat{\alpha}\bullet \hat{\beta})$ as follows. For a pair $(\ell,m)\in S_j(\hat{\alpha}) \times S_{n-j}(\hat{\beta})$, for some $0 \leq j \leq q-2$, set $t = \ell+q^{\deg{(\hat{\alpha})}}m \in \bZ_{\geq 0}$. Since $0 \leq \ell \leq z_{\hat{\alpha}}$, and $0 \leq m \leq z_{\hat{\beta}}$, $$0 \leq t \leq z_{\hat{\alpha}}+ q^{\deg{(\hat{\alpha})}}z_{\hat{\beta}}.$$

Write $\hat{\alpha}=(u_{n_0}, \ldots , u_{n_{r-1}})$, and $\hat{\beta}=(u_{n_r}, \ldots, u_{n_{r+s}})$. Then $\hat{\alpha} \bullet \hat{\beta} = (u_{n_0}, \ldots, u_{n_{r + s}})$, $z_{\hat{\alpha}}=\sum_{i=1}^{r-1} n_i q^i$, and  $z_{\hat{\beta}}=\sum_{j=0}^{s} n_{r+j}q^j$, and thus 

\begin{align*}
z_{\hat{\alpha}}+q^{\deg{(\hat{\alpha})}}z_{\hat{\beta}} &=z_{\hat{\alpha}}+q^r z_{\hat{\beta}}\\
&= \sum_{i=1}^{r-1} n_i q^i + \sum_{j=0}^{s} n_{r+j}q^{r+j}\\
&= \sum_{j=0}^{r+s} n_j q^j\\
&= z_{\hat{\alpha} \bullet \hat{\beta}}
\end{align*}

Therefore 
\begin{align}
\label{e1-main-lem}
0 \leq t \leq z_{\hat{\alpha} \bullet \hat{\beta}}
\end{align}

By Corollary \ref{Cor1}, and since $(\ell,m)\in S_j(\hat{\alpha}) \times S_{n-j}(\hat{\beta})$, we deduce that 
\begin{align}
\label{e2-main-lem}
F_t(\upsilon_{\hat \alpha \bullet \hat \beta}) &\equiv F_{\ell}(\upsilon_{\hat \alpha})F_m(\upsilon_{\hat \beta}) \pmod{\fm} \nonumber \\
&\equiv \wp^j  \wp^{n-j}=\wp^n \pmod{\fm}
\end{align}

Thus (\ref{e1-main-lem}) and (\ref{e2-main-lem}) imply that $t \in S_n(\hat{\alpha}\bullet \hat{\beta})$. Set 
\begin{align}
\label{e3-main-lem}
\psi (\ell,m)=t=\ell+q^{\deg{(\hat{\alpha})}}m \in S_n(\hat{\alpha}\bullet \hat{\beta}). 
\end{align}

It is obvious that $\psi$ is a well-defined map from $\bigcup_{j=0}^{q-2} S_j(\hat{\alpha}) \times S_{n-j} (\hat{\beta})$ to $S_n(\hat{\alpha}\bullet\hat{\beta})$. We now prove that $\psi$ is a bijection.

We first verify that $\psi$ is surjective. Indeed take an arbitrary element $t \in S_n(\hat{\alpha} \bullet \hat{\beta})$. Set
$$\ell=t(\deg{(\hat{\alpha})}-1,0)\geq 0,$$
and
$$m=t(\deg{(\hat{\alpha})}) \geq 0.$$
By Lemma \ref{Lem1}, we see that 
$$F_{\ell}(\upsilon_{\hat{\alpha}}) F_m(\upsilon_{\hat{\beta}}) \equiv F_t(\upsilon_{\hat{\alpha} \bullet \hat{\beta}}) = \wp^n \pmod{\fm},$$
which implies that both $F_{\ell}(\upsilon_{\hat{\alpha}})$ and $F_m(\upsilon_{\hat{\beta}}$) belong to the multiplicative group $(V / \fm)^{\times}$. Thus $F_{\ell}(\upsilon_{\hat{\alpha}}) \equiv \wp^j \pmod{\fm}$ for some $0 \leq j \leq q-2$, and therefore $F_{m}(\upsilon_{\hat{\alpha}}) \equiv \wp^{n - j} \pmod{\fm}$.

Note that $F_{\ell}(\upsilon_{\hat{\alpha}}) \neq 0$, $F_m(\upsilon_{\hat{\beta}}) \neq 0$, and $\upsilon_{\hat{\alpha}}= u_{z_{\hat{\alpha}}}$, $\upsilon_{\hat{\beta}}=u_{z_{\hat{\beta}}}$ (as in Remark \ref{Rem1}). The definition of $(F_i(x))_{i\geq 0}$ (see (\ref{def-F_n})) implies that $F_i(u_h) = 0$ for any $0 \le h < i$. Thus it follows that 
$$0 \leq \ell \leq z_{\hat{\alpha}} \; \; \text{and} \; \; 0\leq m \leq z_{\hat{\beta}}.$$

Since  $F_{\ell}(\upsilon_{\hat{\alpha}}) \equiv \wp^j \pmod{\fm}$ and  $F_{m(\upsilon_{\hat{\beta}})} \equiv \wp^{n-j} \pmod{\fm}$, we deduce that $\ell \in S_j(\hat{\alpha})$, and $m \in S_{n-j}(\hat{\beta})$. Hence 
$$\psi(\ell, m)=\ell+ q^{\deg{(\hat{\alpha}})}m =t(\deg{(\hat{\alpha})}-1,0)+ q^{\deg{(\hat{\alpha})}} t(\deg{(\hat{\alpha})})=t,$$
and thus $\psi$ is surjective.

Now we show that $\psi$ is injective. Assume $(\ell_1, m_1), (\ell_2, m_2) \in \bigcup_{j=0}^{q-2} S_j(\hat{\alpha}) \times S_{n-j}(\hat{\beta})$ such that 
\begin{align*}
\psi(\ell_1, m_1) &=\ell_1+q^{\deg{(\hat{\alpha})}}m_1=\ell_2 +q^{\deg{(\hat{\alpha})}}m_2= \psi (\ell_2,  m_2).
\end{align*}
Without loss of generality, we can assume that $\ell_1 \ge \ell_2$. Thus,
$$\ell_1-\ell_2 = q^{\deg{(\hat{\alpha})}}(m_2-m_1) \ge 0.$$
If $\ell_1 > \ell_2$, then $m_2 > m_1$, and thus 
$$q^{\deg{(\hat{\alpha})}} \le \ell_1-\ell_2 \leq z_{\hat{\alpha}} < q^{\deg{\hat{\alpha}}},$$ 
which is a contradiction. Thus $\ell_1 = \ell_2$, it follows from the above equation that $m_1 = m_2$, which implies that $\psi$ is injective. Therefore $\psi$ is bijective, and the lemma follows immediately.

\end{proof}

The following result follows immediately from Lemma \ref{main-lem}.

\begin{corollary}
\label{M-cor}

Let $\hat{\alpha}, \hat{\beta} \in R^{\omega}$, and $n \in \bZ$. Then
$$\textbf{card}\left(S_n(\hat{\alpha} \bullet \hat{\beta})\right) = \sum_{j=0}^{q-2} \textbf{card}\left(S_j(\hat{\alpha})\right) \textbf{card}\left(S_{n-j}(\hat{\beta})\right).$$

\end{corollary}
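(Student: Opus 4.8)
The plan is to deduce this corollary directly from Lemma \ref{main-lem} by passing from the bijection to a cardinality count. First I would observe that the union $\bigcup_{j=0}^{q-2} S_j(\hat\alpha)\times S_{n-j}(\hat\beta)$ appearing in Lemma \ref{main-lem} is a \emph{disjoint} union: for distinct indices $j_1 \neq j_2$ in $\{0,\ldots,q-2\}$, the sets $S_{j_1}(\hat\alpha)$ and $S_{j_2}(\hat\alpha)$ are disjoint, because if $m \in S_{j_1}(\hat\alpha)\cap S_{j_2}(\hat\alpha)$ then $\wp^{j_1}\equiv F_m(\upsilon_{\hat\alpha})\equiv \wp^{j_2}\pmod{\fm}$, which forces $j_1\equiv j_2\pmod{q-1}$, hence $j_1=j_2$ since both lie in $\{0,\ldots,q-2\}$ and $\wp$ is a primitive root modulo $\fm$ (so $\wp$ has order $q-1$ in $(V/\fm)^\times$). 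Consequently the products $S_{j_1}(\hat\alpha)\times S_{n-j_1}(\hat\beta)$ and $S_{j_2}(\hat\alpha)\times S_{n-j_2}(\hat\beta)$ are disjoint as subsets of $\bZ\times\bZ$, since their first coordinates already lie in disjoint sets.

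Next I would take cardinalities on both sides of the bijection. Since a bijection preserves cardinality,
\begin{align*}
\textbf{card}\left(S_n(\hat\alpha\bullet\hat\beta)\right) = \textbf{card}\left(\bigcup_{j=0}^{q-2} S_j(\hat\alpha)\times S_{n-j}(\hat\beta)\right).
\end{align*}
Then, by the disjointness established above together with the fact that all the sets involved are finite (each $S_j(\hat\alpha)\subseteq\{0,1,\ldots,z_{\hat\alpha}\}$ is finite, and likewise for $S_{n-j}(\hat\beta)$), the cardinality of the disjoint union is the sum of the cardinalities, and the cardinality of a product set is the product of the cardinalities:
\begin{align*}
\textbf{card}\left(\bigcup_{j=0}^{q-2} S_j(\hat\alpha)\times S_{n-j}(\hat\beta)\right) = \sum_{j=0}^{q-2}\textbf{card}\left(S_j(\hat\alpha)\times S_{n-j}(\hat\beta)\right) = \sum_{j=0}^{q-2}\textbf{card}\left(S_j(\hat\alpha)\right)\textbf{card}\left(S_{n-j}(\hat\beta)\right).
\end{align*}
Combining the two displays yields the claimed identity.

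Honestly, there is no serious obstacle here: the corollary is essentially an immediate bookkeeping consequence of Lemma \ref{main-lem}. The only point that requires a word of care — and the one I would make sure to spell out — is the disjointness of the union, since without it one would only get an inequality $\textbf{card}(S_n(\hat\alpha\bullet\hat\beta)) \le \sum_j \textbf{card}(S_j(\hat\alpha))\textbf{card}(S_{n-j}(\hat\beta))$ from the bijection. The disjointness rests entirely on $\wp$ being a primitive root modulo $\fm$, i.e. on the representatives $\wp^0,\wp^1,\ldots,\wp^{q-2}$ being pairwise incongruent modulo $\fm$, which is exactly why the index range is $0\le j\le q-2$ rather than all of $\bZ$. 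I would also note in passing that finiteness of every $S_j$ is automatic from the bound $0\le m\le z_{\hat\alpha}$ built into the definition \eqref{def-S-j}.
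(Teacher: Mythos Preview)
Your proposal is correct and matches the paper's approach: the paper simply states that the corollary ``follows immediately from Lemma \ref{main-lem}'' without further argument, and your write-up merely spells out the implicit disjointness of the union and the routine passage from a bijection of finite sets to an equality of cardinalities. The only addition you make beyond the paper is the explicit verification that the $S_j(\hat\alpha)$ are pairwise disjoint for $0\le j\le q-2$, which is indeed the one point worth checking.
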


Let $(\epsilon_j (u_n))_{j \in \bZ, n \in \bZ_{\geq 0}}$ be the double sequence, where the $\epsilon_j(u_n)$ is defined by (\ref{def-epsilon}). We are interested in computing the generating function for $(\epsilon_j (u_n))_{j \in \bZ, n \in \bZ_{\geq 0}}$, which signifies the distribution of $F_m(u_n)$ modulo $\fm$ for $0 \le m \le n$. Since the constant $\epsilon_j(u_n)$ counts how many integers between $0$ and $n$ for which the value $F_m(u_n)$ is congruent to $\wp^j$ modulo $\fm$, we only need to compute the values $\epsilon_j(u_n)$ for $0\leq j \leq q-2$. 

For each integer $n\geq 0$, let $\cG_n(x) \in \bZ[x]$ be the polynomial defined by 
$$\cG_n(x) = \sum_{j=0}^{q-2} \epsilon_j (u_n)x^j .$$
It follows from the definition of $\epsilon_j(n)$ that $\cG_0(x)=1$. For each integer $n \ge 0$, the polynomial $\cG_n(x)$ is called the \textit{generating polynomial} for the sequence $(\epsilon_j (u_n))_{j \in \bZ}$.

In order to state the main theorem in this paper, we introduce a finite set of non-negative integers $\{e_j(n)\}_{0 \leq j \leq q-1}$ as follows.\\

For $n=0$, define $e_0(0)=1$ and $e_j(0)=0$ for $1 \leq j \leq q-1$. If $n>0$, write $n$ in the $q$-adic expansion of the form $n=\sum_{i=1}^r n_iq^i$ for some $r \geq 0$, where the $n_i$ are in $\{0,\ldots,q-1\}$, and $n_r >0$. For each $0 \leq j \leq q-1$, we define $e_j(n)$ to be the number of times the integer $j$ occurs in the set $\{n_0, n_1,\ldots, n_r\}$.\\

We now prove our main theorem which can be viewed as an analogue of the Garfield--Wilf theorem for discrete valuation domains.\\

\begin{theorem}
\label{main-thm}

Let $n$ be a nonnegative integer. Then 
\begin{align*}
\cG_n(x) \equiv \prod _{j=0}^{q-1} \cG_j(x)^{e_j(n)} \pmod{(x^{q-1}-1)}.
\end{align*}

\end{theorem}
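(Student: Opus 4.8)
The plan is to prove Theorem \ref{main-thm} by induction on the number of nonzero $q$-adic digits of $n$, exploiting Corollary \ref{M-cor} to convert the multiplicativity of the sets $S_j$ under the ``$\bullet$'' operation into a congruence identity for generating polynomials modulo $x^{q-1}-1$. The key translation is this: if $\hat\alpha,\hat\beta\in R^\omega$ and we write $\cG(\hat\alpha)(x) = \sum_{j=0}^{q-2}\textbf{card}(S_j(\hat\alpha))x^j$, then Corollary \ref{M-cor} says precisely that the coefficient of $x^n$ in the \emph{cyclic} convolution $\cG(\hat\alpha)(x)\cG(\hat\beta)(x)\bmod{(x^{q-1}-1)}$ equals $\textbf{card}(S_n(\hat\alpha\bullet\hat\beta))$ for each residue $n$ modulo $q-1$; the reduction modulo $x^{q-1}-1$ is exactly what accounts for the fact that $\wp^j$ depends only on $j$ modulo $q-1$ (since $\wp^{q-1}\equiv 1\pmod{\fm}$), so the indices $n-j$ appearing in Corollary \ref{M-cor} are read cyclically. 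Thus
\begin{align*}
\cG(\hat\alpha\bullet\hat\beta)(x) \equiv \cG(\hat\alpha)(x)\,\cG(\hat\beta)(x) \pmod{(x^{q-1}-1)}.
\end{align*}

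Next I would connect $\cG(\hat\alpha)(x)$ to the $\cG_n(x)$ of the theorem. By Proposition \ref{Prop1}, $\textbf{card}(S_j(\hat\alpha)) = \epsilon_j(u_{z_{\hat\alpha}})$, so $\cG(\hat\alpha)(x) = \cG_{z_{\hat\alpha}}(x)$; in particular the generating polynomial attached to $\hat\alpha$ depends only on $z_{\hat\alpha}$. Now given $n>0$ with $q$-adic expansion $n=\sum_{i=0}^r n_iq^i$ (allowing $n_0$ possibly zero, $n_r>0$), I choose $\hat\alpha = (u_{n_0},u_{n_1},\ldots,u_{n_{r-1}})$ and $\hat\beta = (u_{n_r})$, so that $z_{\hat\alpha\bullet\hat\beta} = n$ while $z_{\hat\alpha} = \sum_{i=0}^{r-1}n_iq^i$ and $z_{\hat\beta} = n_r \in \{1,\ldots,q-1\}$. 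The boxed identity then gives
\begin{align*}
\cG_n(x) \equiv \cG_{\sum_{i<r}n_iq^i}(x)\cdot \cG_{n_r}(x) \pmod{(x^{q-1}-1)}.
\end{align*}
A subtle point to check: $\cG_{n_r}(x) = \cG_j(x)$ where $j = n_r$ is precisely the single-digit case, which is consistent with $e_j$ counting occurrences of digits. Iterating this peeling-off of the top digit — i.e. inducting on $r$, or equivalently on the number of digits — I would reduce $\cG_n(x)$ modulo $x^{q-1}-1$ to a product of $\cG_{n_i}(x)$ over $i=0,\ldots,r$, one factor for each digit; collecting equal digits turns this into $\prod_{j=0}^{q-1}\cG_j(x)^{e_j(n)}$, since $e_j(n)$ is by definition the number of $i$ with $n_i=j$. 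The base cases $n=0$ (where $\cG_0(x)=1$ and all $e_j(0)=0$ except $e_0(0)=1$, consistent since $\cG_0(x)=1$) and single-digit $n$ (trivial) need to be recorded.

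The step I expect to be the main obstacle — or at least the one requiring the most care — is verifying the boxed multiplicativity $\cG(\hat\alpha\bullet\hat\beta)(x)\equiv\cG(\hat\alpha)(x)\cG(\hat\beta)(x)$ rigorously from Corollary \ref{M-cor}: one must match up the cyclic-convolution coefficients correctly, being careful that $S_j(\hat\alpha)$ is indexed by $j\in\bZ$ but only the residue of $j$ modulo $q-1$ matters (because $F_m(\upsilon_{\hat\alpha})\in(V/\fm)^\times$ whenever nonzero, and $(V/\fm)^\times$ is cyclic of order $q-1$ generated by $\wp$), so that $\textbf{card}(S_j(\hat\alpha)) = \textbf{card}(S_{j\bmod(q-1)}(\hat\alpha))$, and hence the sum $\sum_{j=0}^{q-2}\textbf{card}(S_j(\hat\alpha))\textbf{card}(S_{n-j}(\hat\beta))$ is genuinely the $x^{n\bmod(q-1)}$-coefficient of the product of the two degree-$\le(q-2)$ polynomials reduced modulo $x^{q-1}-1$. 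A secondary point is to confirm that the case $\hat\beta=(u_{n_r})$ with $n_r\in\{1,\ldots,q-1\}$ really yields $\cG_{n_r}(x)=\cG_{n_r}(x)$ as used, i.e. that the single-digit generating polynomials $\cG_0(x),\ldots,\cG_{q-1}(x)$ are exactly the building blocks — this is immediate from the definitions but worth stating. Once these bookkeeping matters are settled, the induction is routine.
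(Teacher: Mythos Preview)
Your proposal is correct and follows essentially the same route as the paper: both establish that the assignment $\hat\alpha\mapsto \cG_{z_{\hat\alpha}}(x)\in\bZ[x]/(x^{q-1}-1)$ is a semigroup homomorphism (using Corollary~\ref{M-cor} together with the $(q-1)$-periodicity of $\epsilon_j$ in $j$, which you phrase as cyclic convolution and the paper verifies by a direct computation), and then decompose $\hat\alpha=(u_{n_0},\ldots,u_{n_r})$ into its single-digit factors. The only cosmetic difference is that you peel off one digit at a time by induction while the paper applies the homomorphism property to all $r+1$ factors at once; your point that $\cG_0(x)=1$ makes any discrepancy in counting zero digits harmless is implicit in the paper as well.
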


\begin{proof}
Let $\mathcal{W}$ be the semigroup $\bZ[x]/ (x^{q-1}-1)\bZ[x]$ equipped with the multiplication of polynomials modulo $(x^{q-1}-1)$. Every element in $\mathcal{W}$ can be represented by a polynomial in $\bZ[x]$ of degree $q-2$.\\

We define $\Gamma: R^{\omega} \rightarrow \mathcal{W}$ to be the mapping defined by 
\begin{align*}
\Gamma(\hat{\alpha})=\sum_{m=0}^{q-2} \epsilon_m (u_{z_{\hat{\alpha}}})x^m \pmod{x^{q-1}-1}
\end{align*}
for each $\hat{\alpha} \in R^{\omega}$.

By Remark \ref{Rem1}, one can write
$$
\Gamma(\hat{\alpha})= \sum_{m=0}^{q-2} \epsilon_m (\upsilon_{\hat{\alpha}}) x^m \; \; \text{for each} \; \hat{\alpha} \in R^{\omega}.
$$

We claim that $\Gamma$ is a semigroup homomorphism , i.e., 
$$
\Gamma(\hat{\alpha} \bullet \hat{\beta})=\Gamma(\hat{\alpha})\Gamma(\hat{\beta}) \; \; \text{for any} \; \hat{\alpha}, \hat{\beta} \in R^{\omega}.
$$

Indeed, by Proposition \ref{Prop1} and Corollary \ref{M-cor},
\begin{align*}
\Gamma(\hat{\alpha} \bullet \hat{\beta}) &= \sum_{m=0}^{q-2} \epsilon_m (u_{z_{\hat{\alpha}\bullet\hat{\beta}}})x^m\\
&= \sum_{m=0}^{q-2} \textbf{card}{(S_m (\hat{\alpha}\bullet\hat{\beta}))} x^m\\
&= \sum_{m=0}^{q-2} \sum_{j=0}^{q-2} \textbf{card}{(S_j(\hat{\alpha}))} \textbf{card}{(S_{m-j}(\hat{\beta}))x^m}\\
&= \sum_{m=0}^{q-2} \sum_{j=0}^{q-2} \epsilon_j(u_{z_{\hat{\alpha}}}) \epsilon_{m-j}(u_{z_{\hat{\beta}}})x^m.
\end{align*}

By the definition of $(\epsilon_j (u_n))_{j \in \bZ, n \in \bZ_{\geq 0}}$, it is easy to see that for a fixed integer $n \geq 0$, $\epsilon_j(u_n)$ is periodic of period $q-1$ with respect to the index $j$ (since the group $(\mathcal{V}/ \fm)^{\times}$ is of order $q-1$). Hence,
$$
\epsilon_{m-j} (u_{z_{\hat{\beta}}})= \epsilon_{q-1+(m-j)}(u_{z_{\hat{\beta}}}).
$$
\\

Since $x^{q-1+m} \equiv x^m \pmod{x^{q-1}-1}$,
\begin{align}
\label{e1-main-thm}
\Gamma(\hat{\alpha}\bullet\hat{\beta}) &\equiv \sum_{m=0}^{q-2} \sum_{j=0}^{q-2} \epsilon_j (u_{z_{\hat{\alpha}}}) \epsilon_{q-1+(m-j)} (u_{z_{\hat{\beta}}}) x^{q-1+m} \nonumber\\
&\equiv \sum_{j=0}^{q-2} \epsilon_j (u_{z_{\hat{\alpha}}})x^j \left(\sum_{m=0}^{q-2} \epsilon_{q-1+m-j}(u_{z_{\hat{\beta}}})x^{q-1+m-j}\right) \nonumber\\
& \equiv \sum_{j=0}^{q-2} \epsilon_j (u_{z_{\hat{\alpha}}})x^j \mathcal{A}(j) \pmod{x^{q-1}-1},
\end{align}
where for each $0 \le j \le q - 2$,
\begin{align}
\label{e2-main-thm}
\mathcal{A}(j)= \sum_{m=0}^{q-2} \epsilon_{q-1+m-j} (u_{z_{\hat{\beta}}}) x^{q-1+m-j}.
\end{align}

We claim that $\mathcal{A}(j) \equiv \mathcal{A}(0) \pmod{x^{q-1}-1}$ for every $0 \leq j \leq q-2$. In order to prove this claim, it suffices to prove that 
$$
\mathcal{A}(j) \equiv \mathcal{A}(j+1) \pmod{x^{q-1}-1}, \; \; \text{for each}\; 0\leq j \leq q-3.
$$
\\

Indeed, one sees that
\begin{align*}
\mathcal{A}(j+1) &= \sum_{m=0}^{q-2} \epsilon_{q-1+(m-(j+1))} (u_{z_{\hat{\beta}}})x^{q-1+m-(j+1)}\\
&= \epsilon_{q-2-j}(u_{z_{\hat{\beta}}}) x^{q-2-j} +\sum_{m=1}^{q-2}\epsilon_{q-1+(m-1)-j}(u_{z_{\hat{\beta}}})x^{q-1+(m-1)-j}\\
&= \epsilon_{q-2-j}(u_{z_{\hat{\beta}}}) x^{q-2-j}+ \sum_{m=0}^{q-3} \epsilon_{q-1+(m-j)}(u_{z_{\hat{\beta}}})x^{q-1+m-j}\\
&= \left(\mathcal{A}(j)-\epsilon_{q-1+(q-2)-j}(u_{z_{\hat{\beta}}})x^{q-1+(q-2)-j}\right)+ \epsilon_{q-2-j} (u_{z_{\hat{\beta}}})x^{q-2-j}\\
&= \mathcal{A}(j)+\left(\epsilon_{q-2-j}(u_{z_{\hat{\beta}}})x^{q-2-j}- \epsilon_{2q-3-j}(u_{z_{\hat{\beta}}})x^{2q-3-j}\right)
\end{align*}

Since $(2q-3-j)-(q-2-j)=q-1$, and $(\epsilon_j (u_n))_{j \in \bZ}$ is periodic of period $q-1$ for each fixed integer $n \in \bZ_{\geq 0}$,
$$
\epsilon_{q-2-j}(u_{z_{\hat{\beta}}}) = \epsilon_{2q-3-j} (u_{z_{\hat{\beta}}}).
$$

On the other hand, it is clear that 
$$
x^{2q-3-j} \equiv x^{q-2-j} \pmod{x^{q-1}-1}
$$
since $(2q-3-j)-(q-2-j)=q-1$.\\

Thus $\mathcal{A}(j+1) \equiv \mathcal{A}(j) \pmod{x^{q-1}-1}$ for every $0 \leq j \leq q-3$. \\

Thus $\mathcal{A}(j) \equiv \mathcal{A}(0)$ for all $0 \leq j \leq q-2$. It follows from (\ref{e1-main-thm}) that 
\begin{align*}
\Gamma(\hat{\alpha}\bullet\hat{\beta}) &\equiv \sum_{j=0}^{q-2} \epsilon_j(u_{z_{\hat{\alpha}}}) x^j \mathcal{A}(0)\\
&\equiv \mathcal{A}(0) \sum_{j=0}^{q-2} \epsilon_j (u_{z_{\hat{\alpha}}})x^j \pmod{x^{q-1}-1}
\end{align*}

Note that 
\begin{align*}
\mathcal{A}(0) &= \sum_{m=0}^{q-2} \epsilon_{q-1+m} (u_{z_{\hat{\beta}}}) x^{q-1+m}\\
&\equiv \sum_{m=0}^{q-2} \epsilon_m(u_{z_{\hat{\beta}}})x^m\pmod{x^{q-1}-1}\\
&\equiv \Gamma(\hat{\beta}) \pmod{x^{q-1}-1}.
\end{align*}

Hence
\begin{align}
\label{e3-main-thm}
\Gamma(\hat{\alpha}\bullet\hat{\beta})=\Gamma(\hat{\alpha})\Gamma({\hat{\beta}}),
\end{align}
which proves our claim.

Now we prove that (\ref{e3-main-thm}) implies the theorem. Indeed, the theorem is clearly true for $n = 0$. If $n > 0$, write $n$ in the $q$-adic expansion of the form $n=\sum_{i= 0}^{r} n_i q^i$ for some $r \in \bZ_{\geq 0}$, where $n_r > 0$, and the $n_i$ are in the finite set $\{0, ..., q-1\}$
Set 
\begin{align*}
\hat{\alpha}=(u_{n_0},\ldots, u_{n_r}) \in R^{\omega}. 
\end{align*}

For each $0 \leq i \leq r$, set $\hat{\beta}_i = u_{n_i} \in R^{\omega}$. Then it follows from (\ref{e3-main-thm}) that 
\begin{align*}
\Gamma(\hat{\alpha})=\Gamma(\hat{\beta_0} \bullet \hat{\beta_1} \bullet \cdots\bullet\hat{\beta_r})=\Gamma(\hat{\beta_0}) \cdots \Gamma(\hat{\beta_r}). 
\end{align*}

On the other hand, since $n=z_{\hat{\alpha}}$,
\begin{align*}
\cG_n(x)= \sum_{j=0}^{q-2} \epsilon_j (u_n)x^j = \sum_{j=0}^{q-2} \epsilon_j (u_{z_{\hat{\alpha}}})x^j \equiv \Gamma(\hat{\alpha}) \pmod{x^{q-1}-1}.
\end{align*}

Similarly $\Gamma(\hat{\beta_i}) \equiv \cG_{n_i}(x) \pmod{x^{q-1}-1}$ for each $0 \leq i \leq r$. Hence 
\begin{align*}
\cG_n(x) \equiv \cG_{n_0}(x)...\cG_{n_r}(x) \pmod{x^{q-1}-1}.
\end{align*}

By the definition of $e_j(n)$, it is clear that $\prod_{j=0}^r \cG_{n_j}(x)= \prod_{i=0}^{q-1}\cG_i(x)^{e_i(n)}$, and thus
$$
\cG_n(x)\equiv \prod_{i=0}^{q-1}\cG_i(x)^{e_i(n)} \pmod{x^{q-1}-1},
$$
as required.

\end{proof}

\end{document}